\newtheorem{thm}{Theorem}
\newtheorem{lem}{Lemma}
\newtheorem{exmp}{Example}
\begin{document}
\title{Geometric shrinkage priors for K\"ahlerian signal filters}
\author{Jaehyung Choi}
\address{Department of Applied Mathematics and Statistics\\
 SUNY, Stony Brook, NY 11794, USA}
\email{jj.jaehyung.choi@gmail.com}
\author{Andrew P. Mullhaupt}
\address{Department of Applied Mathematics and Statistics\\
 SUNY, Stony Brook, NY 11794, USA}
\email{doc@zen-pharaohs.com}

\begin{abstract}
	We construct geometric shrinkage priors for K\"ahlerian signal filters. Based on the characteristics of K\"ahler manifolds, an efficient and robust algorithm for finding superharmonic priors which outperform the Jeffreys prior is introduced. Several ans\"atze for the Bayesian predictive priors are also suggested. In particular, the ans\"atze related to K\"ahler potential are geometrically intrinsic priors to the information manifold of which the geometry is derived from the potential. The implication of the algorithm to time series models is also provided.
\end{abstract}

\maketitle
\section{Introduction}

	In information geometry, signal processing is one of the most important applications. In particular, an information geometric approach to various linear time series models has been also well-known~\cite{Amari:2000,Ravishanker:1990p5895,Ravishanker:2001p5836,Barbaresco:2006, Tanaka:2006, Tanaka:2009, Choi:2014c}. The geometric description of the linear systems is not confined to the pursuit of mathematical beauty.  Komaki's work \cite{Komaki:2006} is in the line of developing practical tools for Bayesian inference. Using the Kullback--Leibler divergence as a risk function for estimation, he found that superharmonic shrinkage priors outperform the Jeffreys prior in the viewpoint of information theory. Better prediction in the Bayesian framework is attainable by the Komaki priors.
	
	However, a difficult part of Komaki's idea in practice is verifying whether or not a prior function is superharmonic. In particular, when high-dimensional statistical manifolds are considered, it is technically tricky to test the superharmonicity of prior functions because Laplace--Beltrami operators on the manifolds are non-trivial. Although some superharmonic priors for the autoregressive (AR) models were found not only in the two-dimensional cases \cite{Tanaka:2006, Choi:2014c} but also in arbitrary dimensions \cite{Tanaka:2009}, there is no clue about the Bayesian shrinkage priors of more complicated models such as the autoregressive moving average (ARMA) models, the fractionally integrated ARMA (ARFIMA) models, and any arbitrary signal filters. Additionally, generic algorithms for systematically obtaining the information shrinkage priors are not known yet.
	
	The connection between K\"ahler manifolds and information geometry has been reported \cite{Barndorff-Nielsen:1997,Barbaresco:2006,Barbaresco:2012,Zhang:2013,Barbaresco:2014} and the mathematical correspondence between a K\"ahler manifold and the information geometry of a linear system is recently revealed. It is found that the information geometry of a signal filter with a finite complex cepstrum norm is a K\"ahler manifold \cite{Choi:2014c}. In particular, the Hermitian condition on the K\"ahlerian information manifolds is clearly seen under conditions on the transfer function of the linear system. Moreover, many practical aspects of introducing K\"ahler manifolds to information geometry for signal processing were also reported in the same literature \cite{Choi:2014c}. One of the benefits in the K\"ahlerian information geometry is that the simpler form of the Laplace--Beltrami operator on the K\"ahler manifold is beneficial to finding the Komaki priors. 	
	
	In this paper, we construct Komaki-style shrinkage priors for K\"ahlerian signal filters. By introducing an algorithm which is based on the characteristics of K\"ahler manifolds, the Bayesian predictive priors outperforming the Jeffreys prior can be obtained in a more efficient and more robust way. Several prior ans\"atze are also suggested. Among the ans\"atze, the geometric shrinkage priors related to K\"ahler potential are intrinsic priors on the information manifold because the geometry is given by the K\"ahler potential. We also provide the geometric priors for the ARFIMA models where the Komaki priors have not been reported. The structure of this paper is as follows. In next section, theoretical backgrounds of K\"ahlerian information geometry and superharmonic priors are introduced. In Section \ref{sec_geo_prior_algo_prior}, an algorithm and ans\"atze for the geometric shrinkage priors are suggested.  The implication of the algorithm to the ARFIMA models is given in Section \ref{sec_geo_prior_example}. We conclude the paper in the last section.
	
\section{Theoretical Backgrounds}
\label{sec_geo_prior_back}
\subsection{K\"ahlerian Filters}

	A linear filter with $n$-dimensional complex parameters $\boldsymbol{\xi}$ is characterized by a transfer function $h(w;\boldsymbol{\xi})$ in the frequency domain $w$ with 
	\begin{equation}
		y(w)=h(w;\boldsymbol{\xi})x(w)\nonumber
	\end{equation}
	where $y$ and $x$ are complex output and input signals, respectively. A spectral density function $S(w;\boldsymbol{\xi})$ is defined as the absolute square of the transfer function
	\begin{equation}
		S(w;\boldsymbol{\xi})=|h(w;\boldsymbol{\xi})|^2\nonumber
	\end{equation}
	and it is a real-valued measurable quantity. 

	In information geometry, it is well-known by Amari and Nagaoka \cite{Amari:2000} that the geometry of a linear system is determined by the spectral density function $S(w;\boldsymbol{\xi})$ under the stability condition, minimum phase, and 
\begin{equation}
		\frac{1}{2\pi}\int_{-\pi}^{\pi} |\log{S(w;\boldsymbol{\xi})}|^2 dw <\infty.\nonumber
	\end{equation}
	The last condition is also known as the finite unweighted norm of the power cepstrum of a filter \cite{Bogert:1967,Martin:2000}. For a linear system with the spectral density function satisfying the above conditions, the metric tensor of the information geometry is given by	
\begin{equation}
	\label{metric_sdf}
	g_{\mu\nu}(\boldsymbol{\xi})=\frac{1}{2\pi}\int_{-\pi}^{\pi} (\partial_\mu \log{S}) (\partial_\nu \log{S}) dw\nonumber
	\end{equation}
	where the partial derivatives are taken with respect to the model parameters $\boldsymbol{\xi}$. 
	
	The metric tensor can be expressed in a complexified coordinate system and the Z-transformed transfer function. With the Z-transformation, the holomorphic transfer function can be written in the form of series expansion of $z$
	\begin{equation}
		\label{trans_func_z}
		h(z;\boldsymbol{\xi})=\sum_{r=0}^{\infty}h_r(\boldsymbol{\xi}) z^{-r}
	\end{equation}
	where $h_r$ is an impulse response function. The Z-transformed power spectrum is also defined in the similar way. In this case, the conditions on the transfer function for constructing information geometry are identical to the spectral density function representation except for
	\begin{equation}
	\label{transfer_cepstrum_norm}
		\frac{1}{2\pi i}\oint_{|z|=1} |\log{h(z;\boldsymbol{\xi})}|^2 \frac{dz}{z} <\infty\nonumber
	\end{equation}
	and it is a necessary condition for the finite power cepstrum norm. The condition indicates that the Hardy norm of the logarithmic transfer function, also known as the unweighted complex cepstrum norm~\cite{Oppenheim:1965, Martin:2000}, is finite. The metric tensor of the geometry is given by the transfer function,
	\begin{align}
		\label{metric_complex_coordinate1}
		g_{ij}(\boldsymbol{\xi})=\frac{1}{2\pi i}\oint_{|z|=1} \partial_i \log{h(z;\boldsymbol{\xi})} \partial_j \log{h(z;\boldsymbol{\xi})} \frac{dz}{z}\\
		\label{metric_complex_coordinate2}
		g_{i\bar{j}}(\boldsymbol{\xi})=\frac{1}{2\pi i}\oint_{|z|=1} \partial_i \log{h(z;\boldsymbol{\xi})} \partial_{\bar{j}} \log{\bar{h}(\bar{z};\bar{\boldsymbol{\xi}})} \frac{dz}{z}
	\end{align}
	where $i,j$ run from $1$ to $n$ and $g_{\bar{i}\bar{j}}, g_{\bar{i}j}$ are the complex conjugates of $g_{ij}$ and $g_{i\bar{j}}$, respectively.
	
	After plugging the Z-transformed transfer function, Equation  (\ref{trans_func_z}), into the metric tensor expressions, Equations (\ref{metric_complex_coordinate1}) and  (\ref{metric_complex_coordinate2}), the metric tensor is expressed with the series expansion coefficients in $z$ of the logarithmic transfer function by
	\begin{align}
		g_{ij}&=\partial_i \eta_0\partial_j \eta_0\nonumber\\
		g_{i\bar{j}}&=\partial_i \eta_0\partial_{\bar{j}}\bar{\eta}_0+\sum_{r=1}^{\infty} \partial_i \eta_r \partial_{\bar{j}} \bar{\eta}_r\nonumber
	\end{align}
	where $\eta_r$ is the coefficient of $z^{-r}$ in the series expansion of the logarithmic transfer function, also known as a complex cepstrum coefficient \cite{Oppenheim:1965}. It is obvious that $\eta_0=\log{h_0}$.
	
	Recently, it is found by Choi and Mullhaupt \cite{Choi:2014c} that the information geometry of a linear system with a finite Hardy norm of a logarithmic transfer function (or the complex cepstrum norm) is the K\"ahler manifold that is the Hermitian manifold with the closed K\"ahler two-form: $g_{ij}=g_{\bar{i}\bar{j}}=0$ for the Hermitian manifold and $\partial_i g_{j\bar{k}}=\partial_j g_{i\bar{k}}$, $\partial_{\bar{i}} g_{k\bar{j}}=\partial_{\bar{j}} g_{k\bar{i}}$ for the closed K\"ahler two-form. Additionally, the Hermitian structure can be explicitly seen in the metric tensor if and only if the impulse response function with the highest degree in $z$, {\em i.e.}, $h_0$ in the unilateral transfer function case, is a constant in model parameters $\boldsymbol{\xi}$. In this paper, for simplicity, we only consider unilateral transfer functions with non-zero $h_0$ and the K\"ahler manifolds with the explicit Hermitian conditions on the metric tensors because complex manifolds are always Hermitian manifolds \cite{Nakahara:2003}. In this case, the necessary and sufficient condition for being a K\"ahler manifold is that $h_0(\boldsymbol{\xi})$ is a constant in $\boldsymbol{\xi}$ \cite{Choi:2014c}. 
	
	According to Choi and Mullhaupt \cite{Choi:2014c}, the benefits of the K\"ahlerian description are the followings. First of all, geometric objects are straightforwardly computed on a K\"ahler manifold. The non-trivial metric tensor component is simply derived from the following formula
	\begin{equation}
		\label{eqn_metric_kahler_formula}
		g_{i\bar{j}}=\partial_i\partial_{\bar{j}}\mathcal{K}
	\end{equation}
	where $\mathcal{K}$ is the K\"ahler potential of the geometry. The K\"ahler potential in the information geometry of a linear filter is the square of the Hardy norm (or $H^2$-norm) of the logarithmic transfer function (or the square of the complex cepstrum norm) on the unit disk $\mathbb{D}$
	\begin{equation}
		\label{eqn_kahler_potential_formula}
		\mathcal{K}=\frac{1}{2\pi i}\oint_{|z|=1}|\log{h(z;\boldsymbol{\xi})}|^2\frac{dz}{z}=||\log{h(z;\boldsymbol{\xi})}||^2_{H^2}
	\end{equation}
	and the details of the derivation are given in the literature \cite{Choi:2014c}. The non-trivial components of the Levi--Civita connection are expressed as 
	\begin{equation}
		\label{eqn_connection_formula}
		\Gamma_{ij,\bar{k}}=\partial_ig_{j\bar{k}}=\partial_i\partial_j\partial_{\bar{k}} \mathcal{K}
	\end{equation}
	and the other connection components are all vanishing. Notice that it is much simpler than the connection components on a non-K\"ahler manifold given by
	\begin{equation*}
		\Gamma_{ij,k}=\frac{1}{2}(\partial_ig_{jk}+\partial_jg_{ik}-\partial_kg_{ij})
	\end{equation*}
	and it is obvious that the number of calculation steps is significantly reduced in the K\"ahler case. The Riemann curvature tensor of the linear system geometry is also represented in the simpler form which is given in Choi and Mullhaupt \cite{Choi:2014c}. The Ricci tensor on the K\"ahler manifold is obtained as
	\begin{equation}
		\label{eqn_ricci_formula}
		R_{i\bar{j}}=-\partial_i\partial_{\bar{j}}\log{\mathcal{G}}
	\end{equation}
	where $\mathcal{G}$ is the determinant of the metric tensor. It is evident that we can skip the calculation of the Riemann curvature tensor in order to compute the Ricci tensor on a K\"ahler manifold.
	
	Additionally, the $\alpha$-generalization of the geometric objects is linear in $\alpha$ on K\"ahler manifolds. Since the Riemann curvature tensor on a K\"ahler manifold is linear in the $\alpha$-connection which is $\alpha$-linear, the Riemann tensor also exhibits the $\alpha$-linearity which leads to the $\alpha$-linear Ricci tensor and scalar curvature. 
	
	In addition to these advantages, any submanifolds of a K\"ahler manifold are also K\"ahler manifolds. If the information geometry of a given statistical model is a K\"ahler manifold, its submodels also have K\"ahler manifolds as the information geometry and all the properties of the ambient manifold are also equipped with the submanifolds. 
	
	Lastly, the K\"ahlerian information geometry is also useful to find superharmonic priors because of the simpler Laplace--Beltrami operators on the manifolds. We will cover the details of the superharmonic priors soon.
		
\subsection{Superharmonic Priors}

	For further discussions, we need to introduce the superharmonic priors suggested by Komaki \cite{Komaki:2006}. When we want to find the true probability distribution $p(y|\boldsymbol{\xi})$ based on given samples $x$ of size $N$, one of the best approaches is using Bayesian predictive density $p_\pi (y|x^{(N)})$ with a prior $\pi(\boldsymbol{\xi})$:
	\begin{equation}
		p_\pi (y|x^{(N)})=\frac{\int p(y|\boldsymbol{\xi}) p(x^{(N)}|\boldsymbol{\xi})\pi(\boldsymbol{\xi}) d\boldsymbol{\xi}}{\int p(x^{(N)}|\boldsymbol{\xi})\pi(\boldsymbol{\xi}) d\boldsymbol{\xi}}.\nonumber
	\end{equation}
	The superharmonic priors $\pi_I$ are derived from the difference between two risk functions with respect to the true probability density, one from the Jeffreys prior and another from the superharmonic prior:
	\begin{align}
		\mathbb{E}[D_{KL}(p(y|\boldsymbol{\xi})||p_{\pi_J}(y|x^{(N)}))|\boldsymbol{\xi}]-\mathbb{E}[D_{KL}(p(y|\boldsymbol{\xi})||p_{\pi_I}(y|x^{(N)}))|\boldsymbol{\xi}]\nonumber\\
		=\frac{1}{2N^2}g^{ij}\partial_i\log{\Big(\frac{\pi_I}{\pi_J}\Big)}\partial_j\log{\Big(\frac{\pi_I}{\pi_J}\Big)}-\frac{1}{N^2}\frac{\pi_J}{\pi_I}\Delta\Big(\frac{\pi_I}{\pi_J}\Big)+o(N^{-2})\nonumber
	\end{align}
	where $D_{KL}$ is the Kullback--Leibler divergence and $\pi_J$ is the Jeffreys prior which is the volume form of the statistical manifold. Each risk function indicates how far a given Bayesian predictive density is from the true distribution in the Kullback--Leibler divergence in average. Sine better priors are obtained from smaller risk functions, the priors outperforming the Jeffreys prior make the above expression greater than zero. Since the first term on the right-hand side is non-negative, the risk function of the Komaki prior is decreased with respect to the risk function of the Jeffreys prior if a prior function $\psi=\pi_I/\pi_J$ is superharmonic. If a superharmonic prior function $\psi$ can be found, it is possible to do better Bayesian prediction in the viewpoint of information theory. In the same paper, Komaki also pointed out that shrinkage priors are information-theoretically more improved in prediction than the Jeffreys prior if and only if the square root of a prior function is superharmonic.

	Since Komaki's paper \cite{Komaki:2006}, several superharmonic priors for the AR models have been found \cite{Tanaka:2006, Tanaka:2009, Choi:2014c}. The Komaki prior for the AR(2) model in the pole coordinates \cite{Tanaka:2006} is given by 
	\begin{equation}
		\psi=1-\xi^1\xi^2\nonumber
	\end{equation}
	where $\xi^i$ is a pole of the transfer function. Tanaka \cite{Tanaka:2009} generalized the two-dimensional case to superharmonic priors for the AR model in an arbitrary dimension $p$. The shrinkage prior function for the AR($p$) model is in the form of
	\begin{equation}
		\psi=\prod_{i<j}^p(1-\xi^i\xi^j)\nonumber
	\end{equation}
	where $\xi^i$ is a pole of the AR transfer function. 
	
	As mentioned before, one of the advantages in the K\"ahlerian description is that finding the Komaki prior functions becomes more efficient than those in non-K\"ahler description because the Laplace--Beltrami operators on K\"ahler manifolds are in the simpler forms. For a differentiable function~$\psi$, the Laplace--Beltrami operator in the K\"ahler geometry is represented with
	\begin{equation}
		\Delta \psi=2g^{i\bar{j}}\partial_i\partial_{\bar{j}} \psi.\nonumber
	\end{equation}
	Meanwhile, the Laplace--Beltrami operator on a non-K\"ahler manifold is expressed as
	\begin{align}
		\Delta \psi&=\frac{1}{\sqrt{\mathcal{G}}}\partial_i\Big(\sqrt{\mathcal{G}}g^{ij}\partial_j\psi\Big)\nonumber\\
		&=g^{ij}\partial_i\partial_j\psi+\frac{1}{2}g^{ij}\partial_i \log{\mathcal{G}}\partial_j \psi+\partial_i g^{ij}\partial_{j}\psi\nonumber
	\end{align}
	where $\mathcal{G}$ is the determinant of the metric tensor. It is obvious that additional calculations for the latter two terms in the right-hand side are indispensable in the non-K\"ahler cases.
	
	With the computational benefits on the K\"ahlerian information manifolds, the superharmonic prior function for the K\"ahler-AR(2) model \cite{Choi:2014c} is found
	\begin{equation}
		\psi=(1-|\xi^1|^2)(1-\xi^1\bar{\xi}^2)(1-\xi^2\bar{\xi}^1)(1-|\xi^2|^2)\nonumber
	\end{equation}
	where $\xi^i$ is the $i$-th pole of the transfer function and $\bar{\xi}^i$ is the complex conjugate of $\xi^i$. However, its generalization to any arbitrary dimensions has been unknown. Moreover, the Komaki priors for the ARMA models and the ARFIMA models are not reported yet.
	
\section{Geometric Shrinkage Priors}
\label{sec_geo_prior_algo_prior}

	As shown in the previous section, K\"ahler manifolds in information geometry are useful in order to obtain the superharmonic priors. In this section, we introduce an algorithm to find the geometric shrinkage priors by using the properties of K\"ahler geometry. Moreover, several ans\"atze for the priors are suggested.
	
	For further discussions, let us set $\tau=u^*-\kappa(\boldsymbol{\xi}, \bar{\boldsymbol{\xi}})$ where $u^*$ is a constant in $\boldsymbol{\xi}=(\xi^1,\xi^2,\cdots,\xi^n)$ and its complex conjugate $ \bar{\boldsymbol{\xi}}$. The following lemma is worthwhile when the algorithm for the prior functions is constructed.
\begin{lem}
	\label{lemma_super_sub}
	On a K\"ahler manifold, a function $\psi(\boldsymbol{\xi},\bar{\boldsymbol{\xi}})$ is superharmonic if $\psi(\boldsymbol{\xi},\bar{\boldsymbol{\xi}})$ is in the form of $\psi(\boldsymbol{\xi},\bar{\boldsymbol{\xi}})=\Psi(u^*-\kappa(\boldsymbol{\xi},\bar{\boldsymbol{\xi}}))$ such that $\kappa$ is subharmonic (or harmonic) and $\Psi'(\tau) >0, \Psi''(\tau)\le0$ (or $\Psi'(\tau) >0, \Psi''(\tau)<0$).
\end{lem}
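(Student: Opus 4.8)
The plan is to reduce everything to the K\"ahlerian Laplace--Beltrami formula $\Delta\psi = 2g^{i\bar{j}}\partial_i\partial_{\bar{j}}\psi$ together with two applications of the chain rule. First I would differentiate $\psi=\Psi(\tau)$ with $\tau=u^*-\kappa$: since $u^*$ is constant in $\boldsymbol{\xi},\bar{\boldsymbol{\xi}}$ we get $\partial_i\psi=-\Psi'(\tau)\,\partial_i\kappa$, and differentiating once more in $\bar{\xi}^j$ (using $\partial_{\bar{j}}\tau=-\partial_{\bar{j}}\kappa$, so that the two minus signs cancel in the quadratic term)
\begin{equation}
	\partial_i\partial_{\bar{j}}\psi=\Psi''(\tau)\,(\partial_i\kappa)(\partial_{\bar{j}}\kappa)-\Psi'(\tau)\,\partial_i\partial_{\bar{j}}\kappa.\nonumber
\end{equation}

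Contracting with $2g^{i\bar{j}}$ and recalling that on a K\"ahler manifold $\Delta\kappa=2g^{i\bar{j}}\partial_i\partial_{\bar{j}}\kappa$, this yields
\begin{equation}
	\Delta\psi=2\Psi''(\tau)\,g^{i\bar{j}}(\partial_i\kappa)(\partial_{\bar{j}}\kappa)-\Psi'(\tau)\,\Delta\kappa.\nonumber
\end{equation}
The remaining step is to check that both terms on the right are non-positive. Because $\kappa$ is real-valued one has $\partial_{\bar{j}}\kappa=\overline{\partial_j\kappa}$, so $g^{i\bar{j}}(\partial_i\kappa)(\partial_{\bar{j}}\kappa)$ is the positive-definite Hermitian metric evaluated on the gradient of $\kappa$, hence $\ge 0$; combined with $\Psi''(\tau)\le 0$ the first term is $\le 0$. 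For the second term, subharmonicity of $\kappa$ gives $\Delta\kappa\ge 0$, and with $\Psi'(\tau)>0$ the term is again $\le 0$. Therefore $\Delta\psi\le 0$, i.e. $\psi$ is superharmonic. The harmonic-$\kappa$ variant with $\Psi''(\tau)<0$ is immediate: the $\Delta\kappa$ term drops out and the gradient term alone makes $\Delta\psi\le 0$.

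I do not expect a serious obstacle here; the lemma is essentially the chain rule packaged with the positivity of the Hermitian metric, which is exactly the structural simplification K\"ahler geometry buys us. The only points needing care are (i) the reality of $\kappa$ and of the constant $u^*$ — this is what makes $\tau$ real, $\Psi(\tau)$ meaningful, and the quadratic term a genuine non-negative Hermitian norm rather than a complex quantity of indefinite sign — and (ii) bookkeeping of the sign coming from $\tau=u^*-\kappa$. I would also note in passing that the same identity shows $\psi$ is \emph{strictly} superharmonic at any point where $\mathrm{d}\kappa\neq 0$ as soon as $\Psi''<0$, or wherever $\Delta\kappa>0$; this strictness is what is actually invoked later when the lemma is used to produce priors that strictly improve on the Jeffreys prior.
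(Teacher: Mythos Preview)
Your argument is correct and follows essentially the same route as the paper: apply the K\"ahler Laplace--Beltrami formula, use the chain rule twice to obtain $\Delta\psi=2\Psi''\,g^{i\bar{j}}\partial_i\kappa\,\partial_{\bar{j}}\kappa-\Psi'\,\Delta\kappa$, and then read off the sign from the hypotheses on $\Psi',\Psi''$ and the (sub)harmonicity of $\kappa$. Your explicit remark that reality of $\kappa$ is what makes $g^{i\bar{j}}\partial_i\kappa\,\partial_{\bar{j}}\kappa$ a genuine non-negative Hermitian norm is a useful clarification the paper leaves implicit inside the proof.
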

\begin{proof}
	The Laplace--Beltrami operator on $\psi$ is given by
	\begin{align}
		\Delta \psi&=2g^{i\bar{j}}\partial_i \partial_{\bar{j}}\psi=2g^{i\bar{j}}\partial_i\Big(\big(-\partial_{\bar{j}}\kappa\big)\Psi'\Big)\nonumber\\
		&=2\Psi'' g^{i\bar{j}}\partial_i\kappa\partial_{\bar{j}}\kappa-2\Psi'g^{i\bar{j}}\partial_i\partial_{\bar{j}}\kappa\nonumber\\
		&=2\Psi''||\partial \kappa||^2_{g}-\Psi'\Delta\kappa\nonumber
	\end{align}
	where the derivatives on $\Psi$ are taken with respect to $\tau$. It is obvious that if $\kappa$ is subharmonic (or harmonic) and if $\Psi'(\tau)>0, \Psi''(\tau)\le0$ (or $\Psi'(\tau)>0, \Psi''(\tau)<0$), then the right-hand side is negative, {\em i.e.}, $\psi$ is a superharmonic function.
\end{proof}

	According to Lemma \ref{lemma_super_sub}, superharmonic functions are easily obtained from subharmonic or harmonic functions by simply plugging the (sub-)harmonic functions as $\kappa$ into Lemma \ref{lemma_super_sub}.
	
	By considering that a prior function should be positive, it is able to utilize Lemma \ref{lemma_super_sub} for obtaining the superharmonic prior functions. Let us confine the function $\psi$ in Lemma \ref{lemma_super_sub} to be positive.
\begin{thm}
	\label{thm_super_sub}
	On a K\"ahler manifold, a positive function $\psi=\Psi(u^*-\kappa)$ is a superharmonic  prior function if $\kappa$ is subharmonic (or harmonic) and $\Psi'(\tau)>0$, $\Psi''(\tau)\le0$ (or $\Psi'(\tau) >0, \Psi''(\tau)<0$).
\end{thm}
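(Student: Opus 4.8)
The plan is to observe that Theorem~\ref{thm_super_sub} is nothing more than Lemma~\ref{lemma_super_sub} supplemented by the single extra hypothesis that makes $\psi$ eligible to serve as a prior function, so the proof should be very short and should reduce entirely to material already established.

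First I would recall what is required for $\psi$ to be a \emph{superharmonic prior function} in the sense used in Section~\ref{sec_geo_prior_back}: following Komaki's construction, $\psi=\pi_I/\pi_J$ must be (i)~positive, so that $\pi_I=\psi\,\pi_J$ is a genuine (possibly improper) prior, and (ii)~superharmonic with respect to the Laplace--Beltrami operator of the information manifold, so that the leading-order term in the risk difference has the sign that yields improvement over the Jeffreys prior. Requirement~(i) is precisely the standing assumption of the theorem (``a positive function $\psi$''), so nothing has to be checked there.

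Next I would invoke Lemma~\ref{lemma_super_sub} directly. Under the hypotheses that $\kappa$ is subharmonic (or harmonic) and that $\Psi'(\tau)>0$ with $\Psi''(\tau)\le 0$ (or $\Psi''(\tau)<0$), the lemma already supplies the identity
\[
	\Delta\psi = 2\Psi''\,\|\partial\kappa\|^2_g - \Psi'\,\Delta\kappa,
\]
and the right-hand side is $\le 0$ because $\|\partial\kappa\|^2_g\ge 0$ by positive semidefiniteness of the Hermitian metric, $\Psi''\le 0$, $\Psi'>0$, and $\Delta\kappa\ge 0$ by (sub)harmonicity of $\kappa$. Hence $\psi$ is superharmonic on the K\"ahler manifold. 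Combining this with positivity, which holds by hypothesis, gives exactly the definition of a superharmonic prior function, and the proof is complete.

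I do not expect any genuine obstacle: the only content beyond Lemma~\ref{lemma_super_sub} is the bookkeeping remark that positivity has been imposed precisely so that $\psi$ may legitimately be read as the ratio $\pi_I/\pi_J$ and fed into Komaki's risk expansion. If one wished to be scrupulous one could also note, as in Komaki's original setting, that the resulting prior need not be normalizable and that the improvement statement is asymptotic in the sample size $N$; but neither point bears on the superharmonicity claim, so the argument above is all that is needed.
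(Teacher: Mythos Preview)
Your proposal is correct and follows exactly the paper's own approach: the paper's proof consists of the single remark that the theorem is a special case of Lemma~\ref{lemma_super_sub}, which is precisely what you do (with some additional, but entirely consistent, unpacking of why positivity plus superharmonicity is what is wanted for a prior function).
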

\begin{proof}
	Since this is a special case of Lemma \ref{lemma_super_sub}, the proof is obvious.
\end{proof}

	Although any (sub-)harmonic function $\kappa$ can be used for constructing superharmonic priors, restriction on $\kappa$ makes finding the ans\"atze of the geometric priors easier. From now on, upper-bounded functions are only our concerns. Additionally, we assume that $\kappa$ and $u^*$ are real. With these assumptions, it is possible to set $u^*$ as a constant greater than the upper bound of $\kappa$ in order for $\tau$ to be positive.
	
	 Ans\"atze for $\Psi$ can be found in the following example.
\begin{exmp}

	Given subharmonic (or harmonic) $\kappa$ and positive $\tau$, {\em i.e.}, upper-bounded $\kappa$, the following functions are candidates for $\Psi$
	\begin{align}
		\Psi_1(\tau)&=\tau^a\nonumber\\
		\Psi_2(\tau)&=\log{(1+\tau^a)}\nonumber
	\end{align}
	where $0<a\le1$ (or $0<a<1$).
\end{exmp}
\begin{proof}

	We only cover a subharmonic case for $\kappa$ here and it is also straightforward for the harmonic case. First of all, $\Psi_1$ and $\Psi_2$ are all positive. For $\Psi_1$, it is easy to verify the followings:
	\begin{align}
		\Psi_1'(\tau)&=a\tau^{a-1}>0\nonumber\\
		 \Psi_1''(\tau)&=a(a-1)\tau^{a-2}\le0\nonumber
	\end{align}
	for $0<a\le1$. The similar calculation is repeated for $\Psi_2$:
	\begin{align}
		\Psi_2'(\tau)&=\frac{a\tau^{a-1}}{(1+\tau^{a})}>0\nonumber\\
		\Psi_2''(\tau)&=\frac{a\tau^{a-2}(a-(1+\tau^a))}{(1+\tau^a)^2}\le0\nonumber
	\end{align}
	for $0<a\le1$. 
	
	Both functions $\Psi_1$ and $\Psi_2$ satisfy the conditions for $\Psi$ in Lemma \ref{lemma_super_sub}.
\end{proof}

	It is also possible to find ans\"atze for upper-bounded subharmonic $\kappa$. The following functions are candidates for upper-bounded and subharmonic $\kappa$.
\begin{exmp}
	For positive real numbers $a_r$ and $b_i$, the following subharmonic functions are candidates for $\kappa$ in the cases that those are upper-bounded:
	\begin{align}
		\kappa_1&=\mathcal{K}\nonumber\\
		\kappa_2&=\sum_{r=0}^{\infty} a_r |h_r(\boldsymbol{\xi})|^2\nonumber\\
		\kappa_3&=\sum_{i=1}^{n} b_i |\xi^i|^2.\nonumber
	\end{align}
\end{exmp}
\begin{proof}
	Let us assume that the ans\"atze are upper-bounded in given domains. For $\kappa_1$, it is easy to show that the K\"ahler potential $\mathcal{K}$ is subharmonic:
	\begin{align}
		\Delta \kappa_1&=\Delta \mathcal{K}=2g^{i\bar{j}}\partial_i\partial_{\bar{j}}\mathcal{K}\nonumber\\
		&=2g^{i\bar{j}}g_{i\bar{j}}=2n>0.\nonumber
	\end{align}
	
	The proof for subharmonicity of $\kappa_2$ is as follows:
	\begin{align}
		\Delta \kappa_2&=\Delta \bigg(\sum_{r=0}^{\infty} a_r |h_r(\boldsymbol{\xi})|^2\bigg)=2g^{i\bar{j}}\partial_i\partial_{\bar{j}}\bigg(\sum_{r=0}^{\infty} a_r |h_r(\boldsymbol{\xi})|^2\bigg)\nonumber\\
		&=\sum_{r=0}^{\infty} 2 a_r g^{i\bar{j}}\partial_i h_r \partial_{\bar{j}} \bar{h}_r=\sum_{r=0}^{\infty} 2 a_r ||\partial h_r ||^2_{g}>0.\nonumber
	\end{align}
	
	The subharmonicity of $\kappa_3$ is tested by
	\begin{equation}
		\Delta \kappa_3=\Delta \bigg(\sum_{i=1}^{n} b_i |\xi^i|^2\bigg)=2g^{i\bar{j}}\partial_i\partial_{\bar{j}}\bigg(\sum_{i=1}^{n} b_i |\xi^i|^2\bigg)=\sum_{i=1}^{n} 2 b_i g^{i\bar{i}}>0.\nonumber
	\end{equation}
	
	If the upper-boundedness is satisfied, the above subharmonic functions are ans\"atze for $\kappa$.\end{proof}
	
	Superharmonic prior functions on the K\"ahler manifolds are efficiently constructed from the following algorithm which exploits Theorem \ref{thm_super_sub} and the ans\"atze for $\Psi$ and $\kappa$. When we find positive and superharmonic functions, it is automatically the Komaki-style prior functions as usual. If positive, upper-bounded, and (sub-)harmonic functions are found, those functions are plugged into Theorem \ref{thm_super_sub} in order to obtain superharmonic prior functions. Multiplying the Jeffreys prior by the superharmonic prior functions, we finally acquire the geometric shrinkage priors. Additionally, since the ans\"atze are already given, there is no extra cost to find the Komaki prior functions except for verifying whether or not the information geometry is a K\"ahler manifold. Comparing with the literature on the Komaki priors of the time series models \cite{Tanaka:2006, Tanaka:2009, Choi:2014c}, obtaining the geometric priors on the K\"ahler manifolds becomes more efficient and more robust.
	
\section{Example: ARFIMA Models}

\label{sec_geo_prior_example}

	The ARFIMA model is the generalization of the ARMA model with a fractional differencing parameter in order to model the long memory process. The transfer function of the ARFIMA$(p,d,q)$ model with parameters $\boldsymbol{\xi}=(\xi^{-1},\xi^{0},\xi^1,\cdots,\xi^{p+q})=(\sigma,d,\lambda_1,\cdots,\lambda_p,\mu_1,\cdots,\mu_q)$ is given by
	\begin{equation*}
		h(z;\boldsymbol{\xi})=\frac{\sigma^2}{2\pi}\frac{(1-\mu_1 z^{-1})(1-\mu_2 z^{-1})\cdots(1-\mu_q z^{-1})}{(1-\lambda_1 z^{-1})(1-\lambda_2 z^{-1})\cdots(1-\lambda_p z^{-1})}(1-z^{-1})^d
	\end{equation*}
where $d$ is the differencing parameter and $\mu_i, \lambda_i, \sigma$ are a pole, a root, and a gain in the ARMA model, respectively. It is noteworthy that the transfer function of the ARFIMA model is decomposed into the ARMA model part and the fractionally integration part. Additionally, every poles and roots of the linear system are located inside the unit disk, {\em i.e.}, $|\lambda_i|<1$ for $i=1,\cdots,p$ and $|\mu_i|<1$ for $i=1,\cdots,q$. 
	
	Similar to the ARMA case \cite{Choi:2014c}, the full geometry of the ARFIMA model is a K\"ahler manifold and the submanifold of a constant gain $\sigma$ is also K\"ahler geometry. This submanifold also exhibits the explicit Hermitian condition on the metric tensor. It is easy to cross-check the Hermitian structure by fixing $h_0=1$ up to the gain of the signal filter. We will work on this submanifold.
	
	Since the information geometry of the ARFIMA model is a K\"ahler manifold, the K\"ahler potential of the ARFIMA geometry  is obtained from the square of the Hardy norm of the logarithmic transfer function (or the square of the complex cepstrum norm), Equation  (\ref{eqn_kahler_potential_formula}), represented with
	\begin{equation}
		\label{eqn_kahler_potential_ARFIMA}
		\mathcal{K}=\sum_{r=1}^{\infty}\Big|\frac{d+(\mu_1^r+\cdots+\mu_q^r)-(\lambda_1^r+\cdots+\lambda_p^r)}{r}\Big|^2.
	\end{equation}
	It is obvious that the K\"ahler potential for the ARFIMA model, Equation  (\ref{eqn_kahler_potential_ARFIMA}), is reducible to the K\"ahler potential of the ARMA geometry by setting $d=0$. It is easy to verify that the K\"ahler potential of the ARFIMA geometry is upper-bounded by $(d+p+q)^2\frac{\pi^2}{6}$.
	
	By using Equation  (\ref{eqn_metric_kahler_formula}), the metric tensor of the K\"ahler geometry is simply derived from the K\"ahler potential. The metric tensor of the K\"ahler-ARFIMA geometry is given by
	\begin{displaymath} 
		g_{i\bar{j}} = \left( \begin{array}{ccc} 
			\frac{\pi^2}{6} & \frac{1}{\bar{\lambda}_j} \log{(1-\bar{\lambda}_j)} & -\frac{1}{\bar{\mu}_j} \log{(1-\bar{\mu}_j)} \\  \frac{1}{\lambda_i} \log{(1-\lambda_i)} & \frac{1}{1-\lambda_i\bar{\lambda}_j} & -\frac{1}{1-\lambda_i\bar{\mu}_j} \\  -\frac{1}{\mu_i}\log{(1-\mu_i)}  & -\frac{1}{1-\mu_i\bar{\lambda}_j} & \frac{1}{1-\mu_i\bar{\mu}_j} \end{array} \right)
	\end{displaymath}
	and it is easy to show that the metric tensor contains the pure ARMA metric. The metric tensor is also in the similar form to the ARFIMA geometry in non-complexified coordinates \cite{Ravishanker:2001p5836}. The metric tensor indicates that the ARMA geometry is embedded in the ARFIMA geometry and corresponds to the submanifold of the ARFIMA manifold. The ARMA part of the metric tensor is the same metric with the K\"ahler-ARMA geometry in Choi and Mullhaupt \cite{Choi:2014c}. In addition to that, we can cross-check the fact that the ARMA geometry is also a K\"ahler manifold based on a property of a K\"ahler manifold that a submanifold of the K\"ahler geometry is K\"ahler.
	
	Other geometric objects can be derived from the metric tensor. For example, the non-trivial components of the 0-connection are given by Equation  (\ref{eqn_connection_formula}). It is noteworthy that any connection components with the $d$-coordinate in the first two indices of the connection are trivially zero and the others might not be vanishing. Similar to the 0-connection, the Ricci tensor components along the fractionally integrated direction are also zero because there is no dependence on $d$ in the metric tensor. Considering the Schur complement, the non-vanishing Ricci tensor components are decomposed into the Ricci tensor from the pure ARMA part and the term from the mixing between the ARMA part and the fractionally integrated (FI) part:
	\begin{equation}
		R_{i\bar{j}}=R_{i\bar{j}}^{ARMA}+R_{i\bar{j}}^{ARMA-FI}\nonumber
	\end{equation} 
	where $i$ and $j$ are not along the $d$-coordinate. 
	
	It is the time to be back to the geometric shrinkage priors. Since the K\"ahler potential of a given ARFIMA model is upper-bounded by a constant $u^*=(d+p+q)^2\frac{\pi^2}{6}$, the intrinsic priors on the K\"ahler manifold can be found as it is proven in the previous section. By using the algorithm and the ans\"atze related to the K\"ahler potential, some geometric shrinkage prior functions for the ARFIMA model are constructed as
	\begin{align}
		\psi_1&=(u^*-\mathcal{K})^a\nonumber\\
		\psi_2&=\log{(1+(u^*-\mathcal{K})^a)}\nonumber
	\end{align}
	where $0<a\le1$. It is also noteworthy that when $d=0$ in the K\"ahler potential, superharmonic priors of the ARMA (or AR/MA) models are obtained and finding the priors becomes much simpler than the literature on the Komaki priors of the AR models \cite{Tanaka:2006, Tanaka:2009, Choi:2014c}. Similarly, $\kappa_2$ and $\kappa_3$ are also utilized for the superharmonic prior function ans\"atze in the ARFIMA models because the both functions are upper-bounded on the ARFIMA manifold. Moreover, if we set $d=0$ for $\kappa_2$ or $b_0=0$ for $\kappa_3$, the ans\"atze for the ARFIMA models are reducible to the Komaki priors of the ARMA models.
	
\section{Conclusion}
\label{sec_geo_prior_conclusion}

	In this paper, we build up an algorithm and ans\"atze for the geometric shrinkage priors of K\"ahlerian signal filters. By using the properties of K\"ahler manifolds, an algorithm to find the Komaki priors is constructed and ans\"atze for the prior functions are suggested. Additionally, some ans\"atze associated with the K\"ahler potential are geometrically intrinsic to K\"ahlerian information manifolds because the geometry is derived from the K\"ahler potential which is the square of the complex cepstrum norm of a linear system.
	
	Comparing with the literature on the Komaki priors of the time series models, verification of the geometric priors is much easier on the K\"ahler manifold and it is also possible to acquire the geometric shrinkage priors for highly complicated models in the more efficient and robust way. For example, Bayesian predictive priors for the ARFIMA model are obtained from the algorithm and  ans\"atze for the prior functions. The shrinkage priors of the ARMA cases are simply found from the geometric shrinkage priors of the ARFIMA models by using the property of submanifolds in the K\"ahler geometry.

\section*{Acknowledgements}
	We are thankful to Michael Tiano for useful discussions.

\end{document}